\documentclass{aomart}
\usepackage{amssymb,amsmath,mathrsfs,amsthm, mathtools}
\usepackage[english]{babel}
\usepackage{tikz-cd}
\usepackage{enumitem}
\usepackage{comment}
\usepackage{tcolorbox}

\makeatletter
\newcommand*{\arabiceven}[1]{%
  \expandafter\@arabiceven\csname c@#1\endcsname
}
\newcommand*{\@arabiceven}[1]{%
  \@arabic{\numexpr(#1)*2\relax}%
}
\AddEnumerateCounter\arabiceven\@arabiceven{0}
\makeatother

\def\N{\mathbb{N}}

\def\R{\mathbb{R}}

\let\inclu\hookrightarrow

\def\id{\mathrm{id}}

\def\set{\mathsf{Set}}
\def\top{\mathsf{Top}}
\def\ctd{\mathsf{Centered}}
\def\rst{\mathsf{Raster}}
\def\ftb{\mathsf{Filterbase}}
\def\pretop{\mathsf{PreTop}}
\def\cat{\mathsf{C}}

\newcommand{\card}{\mathop{\text{card}}}

\def\P{\mathcal{P}}
\def\up{\uparrow}

\newtheorem{defn}{Definition}[section]

\newtheorem{prop}{Proposition}[section]

\newtheorem{rmk}[defn]{Remark}


\title[The emergence of filters in topological categories]{The emergence of the concept of filter in topological categories}
\author{Giacomo Dossena}
\email{giacomo.dossena@gmail.com}
\copyrightnote{August 2015}

\begin{document}

\begin{abstract}
In all approaches to convergence where the concept of filter is taken as primary, the usual motivation is the notion of neighborhood filter in a topological space. However, these approaches often lead to spaces more general than topological ones, thereby calling into question the need to use filters in the first place. In this note we overturn the usual view and take as primary the notion of convergence in the most general context of centered spaces. In this setting, the notion of filterbase emerges from the concept of germ of a function, while the concept of filter emerges from an amnestic modification of the subcategory of centered spaces admitting germs at each point.
\end{abstract}

\maketitle
\thispagestyle{plain}

\tableofcontents

\section{Introduction}\label{sec:intro}

Filters on a set have been introduced in 1937 by Henri Cartan (see \cite{cartan1937a, cartan1937b}) to replace sequences in the study of a general topological space (if the space is not first-countable sequences are not enough). From the point of view of order theory, a filter on a set is dual to an ideal in the lattice of subsets: in this sense filters have been introduced independently the same year by Marshall Stone under the name of $\mu$-ideals (see \cite{stone1938}). Compare also \cite{riesz1908} and \cite{ulam1929} for earlier instances of the notion of a filter on a set\footnote{It should be noted that Conditions 1, 2, 3 on p.23 of \cite{riesz1908} actually axiomatize the notion of ultrafilter, making the maximality condition (Condition 4 on the same page) superfluous.}.

Filters are also employed to define certain generalizations of the notion of a topological space, for instance pretopologies, pseudotopologies, convergences (see e.g.~\cite{col-low2001} for a review of these notions), which in turn provide new instructive and significant ways to think of the concept of a topological space. However, to the best of the author's knowledge, in all such instances filters are introduced without much motivation except their role in topology, which makes their use in defining generalizations of topological spaces look less compelling. It is the purpose of this note to offer a possibly new way to arrive at the concept of a filter (and filterbase) by means of certain generalized topological spaces.

In the next section we recall three different types of collections of subsets that we are going to use in the following: rasters, filterbases and filters. In section \ref{sec:coincidence} we relate filters and filterbases on a set $X$ to certain equivalence relations on the set of functions on $X$. In section \ref{sec:centered} we introduce the category of centered spaces and, by applying results from section \ref{sec:coincidence}, we select the subcategory of spaces admitting germs at each point. Finally, section \ref{sec:category} studies the mutual relationships among several topological concrete categories arising by using rasters, filterbases or filters as neighborhood systems, finally showing that the category which uses filters arises as an amnestic modification of the category that uses filterbases.

\section{Rasters, filterbases, filters}\label{sec:collections}
In this section we recall the definition of filter and filterbase, along with the much less known notion of raster (introduced in \cite{richmond-slapal2010}). These notions determine three different kinds of collections of subsets of a given set. While rasters and filterbases are not comparable in general, filters instead are precisely those collections which are simultaneously a filterbase and a raster.

Given a nonempty set $X$ and a nonempty collection $\P$ of subsets of $X$, let us consider the following three conditions.
\begin{enumerate}[label=(\textnormal{F}\arabic{*}), ref=(\textnormal{F}\arabic{*}), start=0]
\item\label{defn:filter0} $\forall n\in\N$, $A_1,\dots, A_n\in\P\implies A_1\cap\cdots\cap A_n\neq\emptyset$,
\item\label{defn:filter1} $A\supset B\in \P\implies A\in\P$,
\item\label{defn:filter2} $A,B\in\P\implies \text{there is } C\in \P$ such that $C\subset A\cap B$.
\end{enumerate}

\begin{defn}
Let $X$ and $\P$ be as above. Then:
\begin{displaymath}
\P \text{ is a }\begin{cases}\text{raster}\\\text{filterbase}\\\text{filter}\end{cases}\text{ on $X$ if it satisfies Conditions }\begin{cases}\ref{defn:filter0},\ref{defn:filter1}\\\ref{defn:filter0},\ref{defn:filter2}\\\ref{defn:filter0},\ref{defn:filter1}, \ref{defn:filter2}\end{cases}.
\end{displaymath}
\end{defn}

Examples of filters are: the collection of all supersets of a given nonempty subset $A\subset X$; the collection of all cofinite subsets of an infinite set $X$ (also called Fr\'{e}chet filter); the collection of all neighborhoods of a point of a topological space. An example of a filterbase which is not a filter is the collection of balls of radius $1/n$, $n\in\N$, centered at a given point of $\R^k$. An example of a raster which is not a filter is the collection $\{\{0,1\},\{1,2\},\{0,1,2\}\}$ of subsets of $\{0,1,2\}$.

\section{Coincidence sets}\label{sec:coincidence}
In this section we relate filters and filterbases on $X$ to certain equivalence relations on the set of functions on $X$.

Let $X$ and $Y$ be two nonempty sets and let $Y^X$ denote the set of all functions from $X$ to $Y$. Fix a nonempty collection $\P$ of subsets of $X$. By means of $\P$ we define a binary relation $\hat{\P}$ on $Y^X$ as follows.

\begin{defn}\label{defn:relation1}
Given $f,g\in Y^X$ we say $f$ is $\P$-related to $g$, and we write $f \hat{\P} g$, if the coincidence set of $f$ and $g$ belongs to $\P$. In other words, $f \hat{\P} g$ if $\{x\in X\mid f(x)=g(x)\}\in\P$.
\end{defn}

To save typographic space, we sometimes write $\{f=g\}$ for $\{x\in X\mid f(x)=g(x)\}$. Notice that the relation $\hat{\P}$ is symmetric because ``$=$'' is symmetric.

\begin{prop}\label{prop:filter}
If $\card{Y} \geq 3$, then $\P$ is a filter if and only if $\hat{\P}$ is a nontrivial equivalence relation.
\end{prop}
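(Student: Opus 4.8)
The plan is to prove both implications by translating membership in $\P$ into statements about $\hat{\P}$, using a single realizability observation: since $\card Y\ge 2$, \emph{every} subset $A\subset X$ arises as a coincidence set, i.e.\ there are $f,g\in Y^X$ with $\{f=g\}=A$ (take $f$ constant and let $g$ agree with $f$ exactly on $A$). Hence $A\in\P$ is equivalent to $f\,\hat{\P}\,g$ for such a pair, and the defining clauses of an equivalence relation can be read off directly: reflexivity is $f\,\hat{\P}\,f$ for all $f$, i.e.\ $X=\{f=f\}\in\P$; symmetry is automatic; and transitivity says $\{f=g\},\{g=h\}\in\P$ force $\{f=h\}\in\P$, the relevant combinatorial fact being the always-valid inclusion $\{f=g\}\cap\{g=h\}\subset\{f=h\}$. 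I will read ``nontrivial'' as having more than one equivalence class, i.e.\ $\hat{\P}$ is not the all-relation (equivalently $\P\neq\P(X)$); this reading is forced, since $\P=\{X\}$ is itself a filter whose associated relation is the identity, which must therefore count as nontrivial.

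For the forward implication I would argue quickly. If $\P$ is a filter, then \ref{defn:filter1} together with nonemptiness of $\P$ gives $X\in\P$, so $\hat{\P}$ is reflexive; symmetry is free; and transitivity follows because $\{f=g\},\{g=h\}\in\P$ yield, via \ref{defn:filter2} and then \ref{defn:filter1}, that the superset $\{f=h\}\in\P$. Nontriviality is forced by \ref{defn:filter0}: taking $f,g$ that differ everywhere (possible as $\card Y\ge 2$) gives $\{f=g\}=\emptyset\notin\P$, so $f\not\hat{\P}g$.

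The substantial direction is the converse, and here $\card Y\ge 3$ does the real work. To obtain \ref{defn:filter1}, given $B\in\P$ and $A\supset B$, I would fix three distinct values $y_0,y_1,y_2\in Y$ and construct $f,g,h$ with $\{f=g\}=\{g=h\}=B$ and $\{f=h\}=A$ (take $f\equiv y_0$; set $g=y_0$ on $B$ and $g=y_1$ off $B$; set $h=y_0$ on $A$ and $h=y_2$ off $A$), so that transitivity delivers $A\in\P$. To obtain closure under binary intersection—which yields \ref{defn:filter2} with $C=A\cap B$—given $A,B\in\P$ I would similarly build $f,g,h$ realizing $\{f=g\}=A$, $\{g=h\}=B$ and $\{f=h\}=A\cap B$ \emph{exactly}, and again invoke transitivity. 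Finally \ref{defn:filter0} follows once $\emptyset\notin\P$: were $\emptyset\in\P$, the upward closure \ref{defn:filter1} just established would force $\P=\P(X)$ and hence make $\hat{\P}$ the all-relation, contradicting nontriviality; combined with intersection-closure this shows every finite intersection of members lies in $\P$ and is therefore nonempty.

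I expect the main obstacle to be the two triple-function constructions, and in particular verifying that the third value is genuinely needed: on the points lying outside the target sets one must keep $g$ and $h$ apart while simultaneously keeping each of them away from the value of $f$, which with only two available values is impossible and would spuriously enlarge the middle coincidence set. Making this necessity precise is exactly what pins the hypothesis $\card Y\ge 3$ to the argument.
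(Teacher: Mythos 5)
Your proposal is correct and takes essentially the same route as the paper: the forward direction via upward closure and the inclusion $\{f=g\}\cap\{g=h\}\subset\{f=h\}$, and the converse via transitivity applied to explicitly constructed triples of functions whose coincidence sets realize the prescribed subsets exactly, with the third value of $Y$ doing precisely the work you identify. The only difference is bookkeeping: the paper uses a single triple (satisfying $\{f=h\}=A$, $\{h=g\}=B$, $\{f=g\}=A\cap B$) to obtain both \ref{defn:filter1} and closure under intersection, whereas you use two separate constructions; the intersection triple you leave sketched does exist --- it is the paper's triple after relabeling, or can be built in your style by setting $f\equiv y_0$, $g=y_0$ on $A$ and $y_1$ off $A$, and $h=y_0$ on $A\cap B$, $y_1$ on $B\setminus A$, $y_2$ elsewhere.
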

\begin{proof}
Let us first assume that $\P$ is a filter on $X$ and prove that $\hat{\P}$ is a nontrivial equivalence relation (we will not need the cardinality assumption on $Y$ for this implication). $\hat{\P}$ is reflexive if and only if $X\in\P$, which holds true for $\P$ because of \fullref{Condition}{defn:filter1} in the definition of filter. Now we show that $\hat{\P}$ is transitive. Assume $f\hat{\P}g$ and $g\hat{\P}h$ for some $f,g,h\in Y^X$. Then $\{f=h\}\supset \{f=g\} \cap \{g=h\}$ and since $\P$ is a filter we get $\{f=h\}\in\P$, that is, $f\hat{\P}h$. Therefore $\hat{\P}$ is an equivalence relation. Moreover, it is nontrivial, otherwise $\emptyset\in\P$ which is forbidden by \fullref{Condition}{defn:filter0}. Now let us assume that $\hat{\P}$ is a nontrivial equivalence relation and prove that $\P$ is a filter on $X$. Pick three distinct points $y_1, y_2, y_3\in Y$ and for each $A,B\subset X$ define the following functions:
\begin{equation}
\begin{aligned}
f(x)&\coloneqq		\begin{cases}
				y_1 & x\in A\cap B,\\
				y_2 & x\in (A\cup B)\setminus (A\cap B),\\
				y_3 & \text{otherwise};
				\end{cases}\\
g(x)&\coloneqq		\begin{cases}
				y_1 & x\in A\cup B,\\
				y_2 & \text{otherwise};\\
				\end{cases}\\
h(x)&\coloneqq		\begin{cases}
				y_2 & x\in A\setminus B,\\
				y_1 & \text{otherwise}.
				\end{cases}
\end{aligned}
\end{equation}
It can be checked that:
\begin{equation}\label{eq:AB}
\begin{aligned}
A&=\{x\in X\mid f(x)=h(x)\},\\
B&=\{x\in X\mid h(x)=g(x)\},\\
A\cap B&=\{x\in X\mid f(x)=g(x)\}.
\end{aligned}
\end{equation}
Now assume $A,B\in\P$. By \eqfullref{Equations}{eq:AB} and the definition of $\hat{\P}$ we have $f \hat{\P} h$ and $h \hat{\P} g$, and by transitivity we get $f \hat{\P} g$, that is, $A\cap B\in\P$. Now assume $B\in\P$ and take any $A\supset B$, so that $A\cap B=B$. By \eqfullref{Equations}{eq:AB} we have $f \hat{\P} g$ and $g \hat{\P} h$, and by transitivity we obtain $f \hat{\P} h$, that is, $A\in\P$. This proves Conditions \ref{defn:filter1} and \ref{defn:filter2} for $\P$. Since $\hat{\P}$ is nontrivial, $\emptyset\not\in\P$ and thus $\P$ is a filter.
\end{proof}

\begin{rmk}
The implication ``if $\P$ is a filter then $\hat{\P}$ is a nontrivial equivalence relation'' is implicitly used in the construction of the hyperreal field ${}^*\R$ of non-standard analysis: ${}^*\R$ is obtained as the set of $\hat{\P}$-equivalence classes in $\R^\N$ where the filter $\P$ on $\N$ is a free ultrafilter.
\end{rmk}

\begin{rmk}
The condition $\card{Y}\geq 3$ is sharp. Take $X=\{0,1,2,3\}$, $Y=\{0,1\}$, $\P=\{\{0\}\cup\{i\}\mid i=1,2,3\}$. Then $\P$ is not a filter (not even a filterbase!) but $\hat{\P}$ is a nontrivial equivalence relation, as can be (tediously) checked.
\end{rmk}

\begin{rmk}
\fullref{Proposition}{prop:filter} is not new: the author is quite sure to have read it somewhere. Unfortunately he could not find any reference in the literature (any help in recovering a reference is appreciated). There is also a striking affinity of \fullref{Proposition}{prop:filter} to a circle of ideas involving ultrafilters, monads, Arrow's theorem, and more. A good starting point is \cite{leinster2013}. To give a flavour of the affinity, consider the following characterization of an ultrafilter (Proposition 1.5 in \cite{leinster2013}): ``Let $X$ be a set and $\P\subset 2^X$. Then $\P$ is an ultrafilter if and only if, whenever $X$ is expressed as a disjoint union of three subsets, exactly one belongs to $\P$''. The number $3$ cannot be lowered.
\end{rmk}

We now enlarge the relation in \fullref{Definition}{defn:relation1} by weakening it as follows.

\begin{defn}\label{defn:relation2}
Given $f,g\in Y^X$ we say $f$ is weakly $\P$-related to $g$, and we write $f \hat{\P}_w g$, if the coincidence set of $f$ and $g$ contains an element of $\P$. In other words, $f \hat{\P}_w g$ if there is $P\in\P$ such that $P\subset \{x\in X\mid f(x)=g(x)\}$.
\end{defn}

As before, $\hat{\P}_w$ is symmetric because ``$=$'' is symmetric. Moreover, $\hat{\P}_w$ is reflexive because $\P$ is nonempty.

\begin{prop}\label{prop:filterbase}
If $\card{Y} \geq 2$, then $\P$ is a filterbase if and only if $\hat{\P}_w$ is a nontrivial equivalence relation.
\end{prop}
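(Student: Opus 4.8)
The plan is to follow the template set by the proof of Proposition~\ref{prop:filter}, adapting it to the weaker relation $\hat{\P}_w$ of Definition~\ref{defn:relation2}. Since the remarks preceding the statement already establish that $\hat{\P}_w$ is reflexive and symmetric, the forward implication reduces to checking transitivity and nontriviality, while the reverse implication reduces to deriving Conditions~\ref{defn:filter0} and~\ref{defn:filter2} from those two features. Throughout, the point of the weakening is that $f\hat{\P}_w g$ is now \emph{witnessed} by some $P\in\P$ with $P\subseteq\{f=g\}$ rather than by $\{f=g\}$ itself, which is precisely what makes directedness~\ref{defn:filter2}, rather than upward closure~\ref{defn:filter1}, the relevant property.

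For the forward direction, assume $\P$ is a filterbase. For transitivity, given $f\hat{\P}_w g$ and $g\hat{\P}_w h$, I would pick witnesses $P,Q\in\P$ with $P\subseteq\{f=g\}$ and $Q\subseteq\{g=h\}$; by Condition~\ref{defn:filter2} there is $C\in\P$ with $C\subseteq P\cap Q\subseteq\{f=g\}\cap\{g=h\}\subseteq\{f=h\}$, so $f\hat{\P}_w h$. For nontriviality, this is where $\card{Y}\geq 2$ enters: choosing distinct $y_1,y_2\in Y$ and the constant functions $f\equiv y_1$, $g\equiv y_2$ gives $\{f=g\}=\emptyset$, so $f\hat{\P}_w g$ would force $\emptyset\in\P$, contradicting Condition~\ref{defn:filter0}.

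For the reverse direction, assume $\hat{\P}_w$ is a nontrivial equivalence relation. Condition~\ref{defn:filter0} is the easier half: if $\emptyset\in\P$ then $\emptyset\subseteq\{f=g\}$ for all $f,g$, making $\hat{\P}_w$ total and hence trivial; so $\emptyset\notin\P$, i.e.\ every member of $\P$ is nonempty, and the general finite-intersection statement then follows by iterating Condition~\ref{defn:filter2} to trap $A_1\cap\cdots\cap A_n$ below a (nonempty) member of $\P$. The substance is Condition~\ref{defn:filter2}. Given $A,B\in\P$, I would, using $\card{Y}\geq 2$, fix distinct $y_1,y_2$ and build two-valued functions
\[
f\equiv y_1,\qquad
h=\begin{cases} y_1 & \text{on } A,\\ y_2 & \text{on } X\setminus A, \end{cases}\qquad
g=\begin{cases} y_1 & \text{on } A\cap B,\\ y_2 & \text{on } X\setminus(A\cap B), \end{cases}
\]
arranged so that $\{f=h\}=A$, $\{f=g\}=A\cap B$, and $B\subseteq\{h=g\}$. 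Then $A\in\P$ witnesses $f\hat{\P}_w h$ and $B\in\P$ witnesses $h\hat{\P}_w g$, whence transitivity yields $C\in\P$ with $C\subseteq\{f=g\}=A\cap B$, which is exactly Condition~\ref{defn:filter2}.

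The main obstacle is this reverse-direction construction. Unlike Proposition~\ref{prop:filter}, where three values allow one to realize three prescribed coincidence sets \emph{exactly}, two-valued functions have coincidence sets that are complements of symmetric differences, so I cannot force all three coincidence sets to equal the desired sets at once. The design above circumvents this by demanding only the inclusions actually needed---exact values for $\{f=h\}$ and $\{f=g\}$, but merely $B\subseteq\{h=g\}$---and the step requiring care is verifying $B\subseteq\{h=g\}$, which amounts to checking that $h$ and $g$ agree on $B$ (both take $y_1$ on $A\cap B$ and, off $A$, both take $y_2$). I expect no difficulty beyond this bookkeeping, and the hypothesis $\card{Y}\geq 2$ is used in exactly two places: the constant-function argument for nontriviality and the two-valued construction for directedness.
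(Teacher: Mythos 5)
Your proof is correct, and its skeleton matches the paper's: the forward direction is the same (Condition \ref{defn:filter2} gives transitivity; two distinct constant functions give nontriviality), and the reverse direction uses the same device of two-valued functions whose coincidence sets realize two of the required sets exactly and merely contain $B$, with transitivity then producing the required $C\in\P$ inside $A\cap B$. You are, if anything, more careful than the paper on two points it leaves terse: that $\card{Y}\geq 2$ is genuinely used for nontriviality in the forward direction, and that Condition \ref{defn:filter0} follows from $\emptyset\notin\P$ together with iterated Condition \ref{defn:filter2}. The one substantive difference is the choice of functions: you take $f$ constant, $h$ the two-valued indicator of $A$, and $g$ the two-valued indicator of $A\cap B$, whereas the paper uses the indicators of $B$, of $A\cup B$, and of the complement of $B\setminus A$. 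For the present proposition both choices work equally well, but the paper's choice is deliberate (see the footnote preceding Proposition \ref{prop:filterbase2}): when $\P=\nu_X(x)$, so that $x$ lies in every member of $\P$, each of the paper's three functions is constant on some member of $\P$ with value equal to its value at $x$ ($f$ on $B$, $g$ and $h$ on $A$), hence is centered at $x$ for \emph{every} centered structure on $Y$; this is exactly what lets the identical proof establish Proposition \ref{prop:filterbase2}, where the relation is restricted to $C_x(X,Y)$. Your $g$ lacks this property: it takes the value $g(x)=y_1$ only on $A\cap B$, and no member of $\P$ need be contained in $A\cap B$ (that containment is precisely the directedness being proved), so $g$ need not be centered at $x$ --- for instance with $X=\{x,a,b\}$, $\P=\{\{x,a\},\{x,b\}\}$, $Y=\{y_1,y_2\}$ and $\nu_Y(y_1)=\{\{y_1\}\}$, your $g$ is not centered at $x$. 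Consequently your argument proves Proposition \ref{prop:filterbase} as stated, but unlike the paper's it would not carry over verbatim to Proposition \ref{prop:filterbase2}.
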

\begin{proof}
Let us first assume $\P$ is a filterbase on $X$ and prove that $\hat{\P}_w$ is a nontrivial equivalence relation (as in \fullref{Proposition}{prop:filter}, we will not need the cardinality assumption on $Y$ for this implication). Symmetry and reflexivity are settled, so let us show that $\hat{\P}_w$ is transitive. Assume $f\hat{\P}_w g$ and $g\hat{\P}_w h$ for some $f,g,h\in Y^X$, which means there are $P_1,P_2\in\P$ such that $P_1\subset \{f=g\}$ and $P_2\subset  \{g=h\}$. By \fullref{Condition}{defn:filter2} there is $P_3\in\P$ such that $P_3\subset P_1\cap P_2$, but $P_1\cap P_2\subset\{f=g\}\cap\{g=h\}\subset\{f=h\}$, therefore $P_3\subset\{f=h\}$ which means $f\hat{\P}_w h$. Clearly $\hat{\P}_w$ is nontrivial, otherwise $\emptyset\in\P$ which is forbidden by \fullref{Condition}{defn:filter0}. Now let us assume $\hat{\P}_w$ is a nontrivial equivalence relation and prove that $\P$ is a filterbase on $X$. Pick two distinct points $y_1, y_2\in Y$ and for each $A,B\subset X$ define the following functions:
\begin{equation}
\begin{aligned}
f(x)&\coloneqq	\begin{cases}
			y_1 & x\in B,\\
			y_2 & \text{otherwise};
			\end{cases}\\
g(x)&\coloneqq	\begin{cases}
			y_1 & x\in A\cup B,\\
			y_2 & \text{otherwise};\\
			\end{cases}\\
h(x)&\coloneqq	\begin{cases}
			y_2 & x\in B\setminus A,\\
			y_1 & \text{otherwise}.
			\end{cases}
\end{aligned}
\end{equation}
It can be checked that:
\begin{equation}\label{eq:AB'}
\begin{aligned}
A&=\{x\in X\mid g(x)=h(x)\},\\
B&\subset\{x\in X\mid f(x)=g(x)\},\\
A\cap B&=\{x\in X\mid f(x)=h(x)\}.
\end{aligned}
\end{equation}
Now assume $A,B\in\P$, so that $g\hat{\P}_w h$ and $f\hat{\P}_w g$. By transitivity we have $f\hat{\P}_w h$, that is, there is $C\in\P$ such that $C\subset \{f=h\}=A\cap B$. This proves \fullref{Condition}{defn:filter2} for $\P$. Since $\hat{\P}_w$ is nontrivial, $\emptyset\not\in\P$ and thus $\P$ is a filterbase.
\end{proof}

\begin{rmk}
The condition $\card{Y}\geq 2$ is sharp: if $\card{Y}=1$ then the set $Y^X$ is a singleton, so the only possible equivalence relation is the trivial one.
\end{rmk}

\section{Centered spaces}\label{sec:centered}
In this section we introduce a very general notion of ``space'' and apply the results from the previous section to select a more specific kind of space involving filterbases.

\begin{defn}[Centered spaces]
A \textbf{centered structure} on a set $X$ is given by assigning to each point $x\in X$ a collection $\nu(x)$ of subsets of $X$ with the property that $x\in N$ for each $N\in\nu(x)$. A set together with a centered structure is called a \textbf{centered space}.
\end{defn}

\begin{rmk}
Centered structures (and morphisms between them, as defined below) appeared already in \cite{csaszar2002} where they are called \emph{generalized neighborhood systems}. We opted for a perhaps less evocative but shorter name.
\end{rmk}

\begin{rmk}
The above definition is evidently inspired by the notion of a neighborhood of a point in a topological space. Analogously, a centered structure allows us to define convergence of sequences: a sequence $s\colon\N\to X$, $n\mapsto s_n$, in the centered space $(X,\nu)$ is said to converge to $x\in X$ if each $N\in\nu(x)$ contains a tail\footnote{A tail of the sequence $s$ is a subset of $X$ of the form $\{s_n\mid n\geq k\}$ for some $k\in\N$.} of $s$. With this definition, the requirement that for every $x\in X$ each $N\in\nu(x)$ contains the point $x$ is equivalent to the (very natural and desirable) requirement that each constant sequence converges.
\end{rmk}

We now define the notion of a morphism between centered spaces. We interpret $\nu(x)$ as a system of ``probes'' allowing us to localize the point $x$. If we are given two systems $\nu_1(x)$ and $\nu_2(x)$ at the same point $x$, we say that $\nu_1(x)$ is finer than $\nu_2(x)$, and we write $\nu_1(x)\preceq \nu_2(x)$, if for each $N\in\nu_2(x)$ there is $N'\in\nu_1(x)$ such that $N'\subset N$. Given a centered space $(X,\nu)$ and a set $Y$, a function $f\colon X\to Y$ allows us to transport $\nu(x)$ from $x$ to $f(x)\in Y$ by simply taking the image $f(\nu(x))\coloneqq\{f(N)\mid N\in\nu(x)\}$. The idea behind the next definition is then to consider only those functions between centered spaces that produce finer (or, to say it better, \emph{not coarser}) centered structures after transportation.

\begin{defn}[Morphisms between centered spaces]
A function $f\colon X\to Y$ between two centered spaces $(X,\nu_X)$ and $(Y,\nu_Y)$ is called \textbf{centered at $x\in X$} if $f(\nu_X(x))\preceq\nu_Y(f(x))$. A function which is centered at each $x\in X$ will be simply called centered.
\end{defn}

Evidently, every topological space can be considered as a centered space by taking as $\nu(x)$ the collection of neighborhoods of $x$, and the notion of centeredness for functions thus becomes continuity. We shall explore the relation between topological spaces and centered spaces more thoroughly in the next section.

Now let $(X,\nu_X)$ and $(Y,\nu_Y)$ be two centered spaces, and fix a point $x\in X$. We consider the relation of being weakly $\nu_X(x)$-related, as defined in \fullref{Definition}{defn:relation2}, restricted to the set $C_x(X,Y)$ of functions from $X$ to $Y$ which are centered at $x$. For ease of notation, let us introduce the symbol $\hat{x}$ for this restricted relation.

In other words, $f\hat{x}g$ if $f$ and $g$ are centered at $x$ and they agree on some element of $\nu_X(x)$. The definition is analogous to the well known construction of the germ at $x$ of a function $f$ on a topological space as the equivalence class of functions agreeing with $f$ on some neighborhood of $x$, with the difference that here the neighborhoods are substituted by the more general collection $\nu_X(x)$. Indeed, in the topological case this relation is an equivalence. By noting that in the proof of \fullref{Proposition}{prop:filterbase} the functions $f,g,h$ are centered at $x$ whatever the centered structure on\footnote{In fact, they have been carefully chosen precisely for this reason.} $Y$, we can state the following key result.

\begin{prop}\label{prop:filterbase2}
If $\card{Y} \geq 2$, then $\nu_X(x)$ is a filterbase if and only if being $\hat{x}$-related is a nontrivial equivalence relation on $C_x(X,Y)$.
\end{prop}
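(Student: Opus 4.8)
The plan is to observe that $\hat{x}$ is nothing but the restriction of the relation $\hat{\P}_w$ of \fullref{Definition}{defn:relation2}, taken with $\P=\nu_X(x)$, to the subset $C_x(X,Y)\subset Y^X$. This lets me lean on \fullref{Proposition}{prop:filterbase} rather than rebuild the equivalence-relation bookkeeping from scratch; the only genuinely new ingredient is keeping track of which functions are centered at $x$.

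For the ``only if'' direction I would argue as follows. If $\nu_X(x)$ is a filterbase then $\hat{\P}_w$ is a nontrivial equivalence relation on all of $Y^X$ by \fullref{Proposition}{prop:filterbase}, and the restriction of an equivalence relation to any subset is again an equivalence relation, so $\hat{x}$ is an equivalence relation on $C_x(X,Y)$ for free. The one point requiring care is nontriviality, since restricting can in principle destroy it: I must produce two functions that are both centered at $x$ yet are \emph{not} $\hat{x}$-related. For this I would take two distinct constant functions $c_{y_1},c_{y_2}$, available because $\card{Y}\geq 2$. Every constant function is centered at $x$ whatever the structures involved, because the image of any $N\in\nu_X(x)$ is the singleton $\{y\}$, which sits inside every $M\in\nu_Y(y)$; and since $c_{y_1},c_{y_2}$ disagree everywhere, their coincidence set is $\emptyset$, which contains no element of $\nu_X(x)$ (as $\emptyset\notin\nu_X(x)$ by \fullref{Condition}{defn:filter0}). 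Hence $c_{y_1},c_{y_2}$ are not $\hat{x}$-related and $\hat{x}$ is nontrivial.

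For the ``if'' direction the restriction trick no longer helps, so I would instead re-run the second half of the proof of \fullref{Proposition}{prop:filterbase}, checking at each step that the witnessing functions live in $C_x(X,Y)$. Given $A,B\in\nu_X(x)$, form the functions $f,g,h$ of that proof. Because $A,B\in\nu_X(x)$ both contain $x$, a direct check shows $f\equiv y_1$ on $B$, $g\equiv y_1$ on $B$, and $h\equiv y_1$ on $A$, so each of $f,g,h$ is constant equal to its value $y_1$ at $x$ on a member of $\nu_X(x)$; exactly as in the constant-function computation above, this makes all three centered at $x$ no matter what $\nu_Y$ is. With $f,g,h\in C_x(X,Y)$ secured, the coincidence-set identities \eqfullref{Equations}{eq:AB'} give $g\hat{x}h$ and $f\hat{x}g$, and transitivity of $\hat{x}$ yields $f\hat{x}h$, i.e.\ some $C\in\nu_X(x)$ with $C\subset A\cap B$; this is \fullref{Condition}{defn:filter2}. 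Finally, nontriviality of $\hat{x}$ forces $\emptyset\notin\nu_X(x)$, giving \fullref{Condition}{defn:filter0}, and, via reflexivity applied to any member of the nonempty $C_x(X,Y)$, also forces $\nu_X(x)\neq\emptyset$; hence $\nu_X(x)$ is a filterbase.

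The main obstacle — and really the heart of the statement — is precisely the centeredness bookkeeping in the ``if'' direction: the whole point is that passing from $Y^X$ to the more restrictive domain $C_x(X,Y)$ does not weaken the argument, and this hinges on the special functions $f,g,h$ being centered at $x$ for an \emph{arbitrary} target structure $\nu_Y$. I would therefore spell out that verification with care, since it is the only place where the restriction to centered functions could have caused the equivalence to break down.
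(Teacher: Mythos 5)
Your proof is correct and takes essentially the same route as the paper: the paper's entire argument for this proposition is the one-sentence observation, made just before the statement, that the functions $f,g,h$ from the proof of Proposition~\ref{prop:filterbase} are centered at $x$ whatever the centered structure on $Y$ (each being constant, with value $f(x)=g(x)=h(x)=y_1$, on a member of $\nu_X(x)$), which is precisely the key verification you carry out in your ``if'' direction. The extra bookkeeping you supply --- restricting an equivalence relation to $C_x(X,Y)$, nontriviality via two distinct constant functions, and nonemptiness of $\nu_X(x)$ via reflexivity --- simply makes explicit what the paper leaves to the reader.
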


The upshot of the previous proposition is that, if we want to be able to speak of germs of functions at $x$ in a centered space, it is necessary and sufficient that $\nu(x)$ be a filterbase. In this sense, the concept of filterbase corresponds to the existence of germs in centered spaces.

\begin{rmk}
Notice that the previous proposition remains true if we consider the weak $\nu_X(x)$-relation to be defined on the whole set of functions from $X$ to $Y$, not just those which are centered at $x$. On the other hand, it is very likely that one implication becomes false if we restrict to everywhere centered functions (or functions which are centered on some $N\in\nu_X(x)$). The problem is to be found in the ``lack of coherence'' among different $\nu(x)$s when varying $x\in X$, if $(X,\nu)$ is a generic centered space. What remains true is the other implication: if $\nu(x)$ is a filterbase, whatever set of functions on $X$ we choose, the weak $\nu_X(x)$-relation will always be an equivalence.
\end{rmk}

\section{Categorical considerations}\label{sec:category}

For our next purposes it is convenient to introduce two operations on collections of subsets. Given a collection $\P$ of subsets of $X$, we define:
\begin{equation}
\begin{aligned}
\P^\up\coloneqq &\{A\subset X\mid \exists P\in\P\text{ such that }P\subset A\}\\
\P^\cap\coloneqq &\{A\subset X\mid (\exists n\in\N)(\exists P_1,\dots,P_n\in\P)\text{ such that }A=\cap_{i=1}^n P_i\}
\end{aligned}
\end{equation}
Note the formulas
\begin{itemize}
\item $\P\subset\P^\up$,
\item $\P\subset\P^\cap$,
\item $\P^{\up\up}=\P^\up$,
\item $\P^{\cap\cap}=\P^\cap$,
\item $\P^{\up\cap}=\P^{\cap\up}$.
\end{itemize}

\begin{prop}
Given $\P\in 2^{2^X}$ with $\emptyset\not\in\P$, the following hold:
\begin{itemize}
\item $\P$ is a raster iff $\P=\P^\up$,
\item $\P$ is a filterbase iff $\P\preceq\P^\cap$,
\item $\P$ is a filter iff $\P=\P^{\cap\up}$,
\item $\P^\up$ is the smallest raster containing $\P$,
\item $\P^\cap$ is the smallest filterbase containing $\P$,
\item $\P^{\up\cap}$ is the smallest filter containing $\P$,
\item if $\P$ is a raster, then $\P^\cap$ is the smallest filter containing $\P$,
\item if $\P$ is a filterbase, then $\P^\up$ is the smallest filter containing $\P$.
\end{itemize}
\end{prop}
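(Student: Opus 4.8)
The plan is to reduce every item to the two closure conditions \ref{defn:filter1} and \ref{defn:filter2} together with the five listed identities, proving the three characterizations first and then reading off the generation statements from them. Throughout I will use the nonemptiness hypothesis in the strong form of the finite intersection property \ref{defn:filter0}, since that is exactly what guarantees that the collections built by the $\cap$-operation stay proper (contain no empty set); indeed, without it no filterbase or filter containing $\P$ exists at all, so the minimality statements would be vacuous.

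For the raster characterization, since $\P\subseteq\P^\up$ always holds, the equality $\P=\P^\up$ is equivalent to the reverse inclusion $\P^\up\subseteq\P$, and unwinding the definition of $\P^\up$ shows this is verbatim condition \ref{defn:filter1}; hence $\P$ is a raster iff $\P=\P^\up$. For the filterbase characterization I read $\P\preceq\P^\cap$ as the assertion that every finite intersection of members of $\P$ has a member of $\P$ below it: specializing to two-fold intersections is precisely \ref{defn:filter2}, and conversely \ref{defn:filter2} propagates to all finite intersections by induction on $n\in\N$, so $\P$ is a filterbase iff $\P\preceq\P^\cap$. For the filter characterization I combine the two using $\P^{\up\cap}=\P^{\cap\up}$: the equation $\P=\P^{\cap\up}$ forces $\P$ to be at once upward closed and closed under finite intersections, hence a filter, while conversely any filter absorbs both finite intersections and supersets of its members and therefore equals $\P^{\cap\up}$.

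For the generation statements I read ``smallest $X$ containing $\P$'' as the $\subseteq$-least such $X$ when $X$ is saturated (rasters and filters) and, more carefully, as the coarsest such $X$ in the refinement order $\preceq$ in general. The raster case is immediate: $\P^\up$ is upward closed by $\P^{\up\up}=\P^\up$, is proper because $\emptyset\in\P^\up$ would force $\emptyset\in\P$, contains $\P$, and is contained in every raster containing $\P$ since such a raster is upward closed. The filter case is parallel and slick: $\P^{\up\cap}$ equals $\P^{\cap\up}$, so it is simultaneously of the form $(\cdot)^\up$ (upward closed) and of the form $(\cdot)^\cap$ (closed under finite intersections), hence a filter; it contains $\P$ and lies inside any filter containing $\P$, which is closed under both operations. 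For the filterbase, $\P^\cap$ is closed under finite intersections by $\P^{\cap\cap}=\P^\cap$, hence directed and a filterbase, and it contains $\P$.

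The last two statements I would not reprove but specialize from the filter case. If $\P$ is a raster then $\P=\P^\up$, so $\P^{\up\cap}=\P^\cap$ and the smallest filter containing $\P$ is $\P^\cap$. If $\P$ is a filterbase then directedness gives that each finite intersection of members of $\P$ contains a member of $\P$, so $\P^\cap\subseteq\P^\up$ and therefore $\P^{\cap\up}=\P^\up$; with $\P^{\up\cap}=\P^{\cap\up}$ this identifies the smallest filter containing $\P$ as $\P^\up$. The step I expect to be the real obstacle is the minimality in the filterbase-generation statement: unlike rasters and filters, filterbases are not saturated, since a filterbase containing $\P$ need only contain sets below the pairwise intersections, not those intersections themselves, so there is no $\subseteq$-least filterbase containing $\P$. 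The statement must therefore be read in the order $\preceq$, and the content to verify is that every filterbase containing $\P$ refines $\P^\cap$ — which again follows directly from directedness.
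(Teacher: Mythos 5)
Your proof is correct, and methodologically it is simply the written-out form of the paper's own proof, which consists of the single sentence ``It is enough to apply the definitions.'' Beyond that, you have correctly diagnosed the two places where the \emph{statement} itself needs care, and both of your interpretive moves are necessary repairs rather than conveniences. First, the literal hypothesis $\emptyset\notin\P$ is genuinely too weak for the raster characterization and for the three generation items: for $\P=\{\{0\},\{1\},\{0,1\}\}$ on $X=\{0,1\}$ we have $\P=\P^\up$ and $\emptyset\notin\P$, yet $\{0\}\cap\{1\}=\emptyset$ violates \ref{defn:filter0}, so $\P$ is not a raster and indeed no raster, filterbase or filter contains it at all. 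Assuming \ref{defn:filter0} outright, as you do, is the only way to make those items true. Second, you are right that there is in general no $\subseteq$-least filterbase containing $\P$ (with $\P=\{[0,2],[1,3]\}$, the collection $\{[0,2],[1,3],\{3/2\}\}$ is a filterbase containing $\P$ but omitting $[1,2]$, and one checks no filterbase containing $\P$ is contained in all others), so the fifth item can only be read in the refinement order $\preceq$, exactly as you read it; your verification via directedness is the right one.

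One step you should add: in the item asserting that $\P^\up$ is the smallest raster containing $\P$, properness is not enough. For upward-closed collections, $\emptyset\notin\P^\up$ does \emph{not} imply \ref{defn:filter0} (the example $\{\{0\},\{1\},\{0,1\}\}$ above is again a witness), so after checking $\P\subseteq\P^\up=\P^{\up\up}$ and $\emptyset\notin\P^\up$ you still owe the line: any finite intersection of elements of $\P^\up$ contains a finite intersection of elements of $\P$, which is nonempty by the assumed \ref{defn:filter0}. The same observation is what makes $\P^{\up\cap}$ proper in the filter-generation item (there, unlike for $\P^\up$, closure under finite intersections does reduce \ref{defn:filter0} to properness, so your blanket remark suffices once properness is established by this line). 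With that addition, all eight items go through as you argue.
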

\begin{proof}
It is enough to apply the definitions.
\end{proof}

We are now ready to introduce several concrete categories of generalized topological spaces. They all share the same general construction of centered spaces: a generalized space is a set $X$ together with a choice, for each point of $X$, of a collection of subsets of $X$ containing the point $x$. Morphisms are defined as in centered spaces. Depending on the kind of collections that we allow, we obtain the following different categories: $\ctd$, $\rst$, $\ftb$, $\pretop$. In $\ctd$ no restriction is put on the collections $\nu(x)$ for each $x\in X$; in $\rst$ they are all rasters; in $\ftb$ they are all filterbases; in $\pretop$ they are all filters.

\begin{rmk}
Raster spaces are called neighborhood spaces in \cite{kent-min2002} (the original paper where they were introduced), in \cite{richmond-slapal2010} and in references therein. Since the latter name is also used in the literature to indicate pretopological spaces, to avoid confusion we opted for the new name ``raster spaces'' in accordance with the notion of raster introduced in \cite{richmond-slapal2010}.
\end{rmk}

There is an obvious diagram of full embeddings (the first embedding corresponds to the presentation of a topology in terms of neighborhood filters subject to a certain coherence condition):

\begin{center}
\begin{tikzcd}
&	&	\ftb\arrow[hookrightarrow]{rd}	&	\\
\top\arrow[hookrightarrow]{r}   &    \pretop\arrow[hookrightarrow]{ru}\arrow[hookrightarrow]{rd}	&	&	\ctd\;.\\
&	&	\rst\arrow[hookrightarrow]{ru}	&
\end{tikzcd}
\end{center}

Each of these concrete categories is topological, as can be seen explicitly: given a set $X$ and a family $$\{(X_i,\nu_i)\mid i\in I\}$$ of centered, or raster, or filterbase, or pretopological, or topological spaces with functions $f_i\colon X\to X_i$, the corresponding initial structure on $X$ is determined by defining, for each $x\in X$,
\begin{equation}
\begin{aligned}
\nu(x)\coloneqq&\{f_i^{-1}(N)\mid N\in\nu_i(f_i(x)), i\in I\}&\text{ (in $\ctd$)}\\
\nu(x)\coloneqq&\{f_i^{-1}(N)\mid N\in\nu_i(f_i(x)), i\in I\}^\cap&\text{ (in $\ftb$)}\\
\nu(x)\coloneqq&\{f_i^{-1}(N)\mid N\in\nu_i(f_i(x)), i\in I\}^\up&\text{ (in $\rst$)}\\
\nu(x)\coloneqq&\{f_i^{-1}(N)\mid N\in\nu_i(f_i(x)), i\in I\}^{\cap\up}&\text{ (in $\pretop$ and $\top$)}
\end{aligned}
\end{equation}

We recall that for a concrete category $(\cat,|~|)$, a concrete subcategory $\cat'\subset \cat$ is concretely reflective in $\cat$ if for each $C\in\cat$ there is $C'\in\cat'$ with $|C|=|C'|$ such that the identity function $\id_{|C|}\colon C\to C'$ is a morphism, and every morphism from $C$ into an object of $\cat'$ factorizes through $\id_{|C|}$. The dual notion of concretely coreflective subcategory is obtained by reversing all arrows, as usual.

\begin{prop}
The various embeddings are concretely reflective (r) and/or concretely coreflective (c) according to the labels in the following diagram:
\begin{center}
\begin{tikzcd}
&	&	\ftb\arrow[hookrightarrow, "\text{c}"]{rd}	&	\\
\top\arrow[hookrightarrow,"\text{r}"]{r}   &    \pretop\arrow[hookrightarrow, "\text{r,c}"]{ru}\arrow[hookrightarrow, "\text{c}", swap]{rd}	&	&	\ctd\;.\\
&	&	\rst\arrow[hookrightarrow, "\text{r,c}", swap]{ru}	&
\end{tikzcd}
\end{center}
\end{prop}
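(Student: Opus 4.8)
The plan is to reduce the whole diagram to the two operations $(-)^\up$ and $(-)^\cap$ together with a single invariance lemma. Writing $\nu(x)\preceq\mu(x)$ for the refinement relation and $\nu(x)\equiv\mu(x)$ when it holds in both directions, I would first prove that \emph{centeredness depends only on the refinement classes of the collections involved}: if $\nu_X(x)\equiv\nu_X'(x)$ and $\nu_Y(f(x))\equiv\nu_Y'(f(x))$, then $f$ is centered at $x$ from $(X,\nu_X)$ to $(Y,\nu_Y)$ iff it is centered at $x$ from $(X,\nu_X')$ to $(Y,\nu_Y')$. This is immediate once one checks that the image operation $f(-)$ preserves $\equiv$ and that $\preceq$ descends to refinement classes. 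Since $\nu(x)\equiv\nu(x)^\up$ by the preceding Proposition, the operation $(-)^\up$ alters an object only up to isomorphism in each of our categories, whereas $(-)^\cap$ produces a genuinely finer, and in general non-isomorphic, object.

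For the two arrows carrying the label $(r,c)$ that come from upward closure, namely $\rst\hookrightarrow\ctd$ and $\pretop\hookrightarrow\ftb$, I would show that each ambient object is \emph{concretely isomorphic} to its image under $(-)^\up$. For $(X,\nu)\in\ctd$ the collection $\nu^\up$ is a raster lying in the refinement class of $\nu$, and by the lemma both $\id\colon(X,\nu)\to(X,\nu^\up)$ and $\id\colon(X,\nu^\up)\to(X,\nu)$ are centered; composing an arbitrary morphism with one of these two identities exhibits $(X,\nu^\up)$ as simultaneously a concrete reflection and a concrete coreflection. The same argument applies verbatim to $\pretop\hookrightarrow\ftb$, using that the upward closure of a filterbase is a filter (preceding Proposition). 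Thus both $(r,c)$-labels on the $\up$-arrows cost nothing beyond the lemma.

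The two arrows labelled $c$ only, $\ftb\hookrightarrow\ctd$ and $\pretop\hookrightarrow\rst$, I would handle by the coreflector $(X,\nu)\mapsto(X,\nu^\cap)$; by the preceding Proposition $\nu^\cap$ is a filterbase, and a filter when $\nu$ is a raster, while $\id\colon(X,\nu^\cap)\to(X,\nu)$ is centered because $\nu(x)\subseteq\nu^\cap(x)$. The one substantive verification is the universal property: given a filterbase space $(Z,\mu)$ and a centered $g\colon(Z,\mu)\to(X,\nu)$, I must promote it to a centered map into $(X,\nu^\cap)$, and this is precisely where the filterbase axiom of the \emph{domain} enters — an element of $\nu^\cap(g(z))$ is a finite intersection $N_1\cap\dots\cap N_k$ with $N_i\in\nu(g(z))$, one picks $M_i\in\mu(z)$ with $g(M_i)\subseteq N_i$ and then $M\in\mu(z)$ with $M\subseteq M_1\cap\dots\cap M_k$, so $g(M)\subseteq N_1\cap\dots\cap N_k$. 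To see these embeddings are \emph{not} reflective I would give a small counterexample: on $X=\{x,p,q\}$ put $\nu(x)=\{\{x,p\},\{x,q\}\}$ with singleton structures at $p,q$. The two maps collapsing $X$ onto a two-point space along $\{x,p\}$ and along $\{x,q\}$ are centered into $(X,\nu)$ and force any reflection $\nu'(x)$ to contain $\{x,p\}$ and $\{x,q\}$; the filterbase axiom then forces $\{x\}\in\nu'(x)$, contradicting that $\id\colon(X,\nu)\to(X,\nu')$ is centered. Replacing $\nu(x)$ by its upward closure refutes reflectivity of $\pretop\hookrightarrow\rst$ identically. Note that for all four of these arrows the subcategory is not isomorphism-closed in the ambient category (a filterbase is $\ctd$-isomorphic to rasters that are not filterbases), so the tidy ``closed under initial/final structures'' criterion does not apply directly and the explicit constructions above are the right tool.

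The remaining arrow $\top\hookrightarrow\pretop$ is the only genuinely topological one, and I expect it to be the main obstacle precisely because it is governed by neither $(-)^\up$ nor $(-)^\cap$. For reflectivity I would use the classical topologization: send $(X,\phi)$ to the topology $\tau=\{U\mid U\in\phi(x)\ \text{for all}\ x\in U\}$ with its neighborhood filters $\phi^t$; since $\phi^t(x)\subseteq\phi(x)$ the map $\id\colon(X,\phi)\to(X,\phi^t)$ is centered, and the universal property reduces to the standard fact that if $g\colon(X,\phi)\to(Z,\psi)$ is centered with $(Z,\psi)$ topological then the preimage of every open set is $\phi$-open, whence $g$ is continuous for $\tau$. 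Here $\top$ \emph{is} isomorphism-closed in $\pretop$ (closed under homeomorphism), so one may equivalently observe that $\top$ is closed under initial sources — the initial structure $\{f_i^{-1}(N)\}^{\cap\up}$ is the neighborhood system of the initial topology — and invoke the characterization of concretely reflective subcategories of a topological category. The delicate point, and the heart of the Proposition, is the failure of coreflectivity: I would argue that $\top$ is \emph{not} closed under final sinks in $\pretop$, since the final (e.g.\ quotient) pretopology of topological spaces need not have idempotent closure and hence need not be topological; by the dual characterization this rules out a concrete coreflection. Producing this example explicitly, and distinguishing it carefully from the idempotency that makes the reflection succeed, is the step where real care is required.
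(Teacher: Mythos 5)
Your proof of the displayed labels is, in substance, the paper's proof: you use the same three constructions, namely the topologization $\tau=\{U\subset X\mid U\in\nu(x)\ \forall x\in U\}$ for the reflectivity of $\top\inclu\pretop$, the identity-carried maps $\id_X\colon(X,\nu)\to(X,\nu^\up)$ and $\id_X\colon(X,\nu^\up)\to(X,\nu)$ for the labels (r,c) on $\pretop\inclu\ftb$ and $\rst\inclu\ctd$, and the coreflection $\id_X\colon(X,\nu^\cap)\to(X,\nu)$ for $\ftb\inclu\ctd$ and $\pretop\inclu\rst$, verified by exactly the paper's use of the filterbase axiom of the \emph{domain} (choose $M_i$ with $g(M_i)\subset N_i$, then $M\subset M_1\cap\cdots\cap M_k$). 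Your invariance lemma (centeredness depends only on $\equiv$-classes) is a tidy packaging of what the paper dismisses as ``immediate to check,'' not a different route.

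Where you genuinely depart from the paper is in scope. You read the diagram as also asserting that the \emph{missing} labels fail, and you attempt to prove those negative statements; the paper's proof establishes only the positive ones and is silent on the negatives. Your counterexample for the non-reflectivity of $\ftb\inclu\ctd$ and $\pretop\inclu\rst$ is correct and is a genuine addition: the two collapsing maps force $\{x,p\},\{x,q\}\in\nu'(x)$, Condition~\ref{defn:filter2} then forces $\{x\}\in\nu'(x)$, and $\{x\}$ contains no element of $\nu(x)$, contradicting that $\id_X\colon(X,\nu)\to(X,\nu')$ is centered. By contrast, your argument for the non-coreflectivity of $\top\inclu\pretop$ stops exactly at the step that carries all the weight: everything hinges on exhibiting a pretopological space, obtained from a topological one by a quotient, whose final pretopology is not topological, and you never produce it. To close the gap: let $q\colon\R\to X$ collapse $A=\{1/n\mid n\in\N\}$ to a point $a$, and let $\nu$ be the final pretopology ($\nu(q(t))$ is generated by $\{q(U)\mid U\ni t \text{ open}\}$ for $t\notin A$, and $\nu(a)$ is the intersection of the pushforward filters over $A$). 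With $U=(-1/2,1/2)$, the set $q(U)$ lies in $\nu(q(0))$ but is not a neighborhood of $q(0)$ in the quotient topology $\tau_q$: a $\tau_q$-open $W$ with $q(0)\in W\subset q(U)$ has open saturated preimage inside $U\cup A$; if $a\in W$ the preimage must contain an interval around $1$, impossible, and if $a\notin W$ it is an open set around $0$ missing $A$, impossible since $1/n\to 0$. Now if $(X,\sigma)$ were a concrete coreflection into $\top$, the counit $\id_X\colon(X,\sigma)\to(X,\nu)$ gives $\nu(z)\subset\mathcal{N}_\sigma(z)$ (neighborhood filters are upward closed), while factoring $q$ through the coreflection gives $\sigma\subset\tau_q$; hence $q(U)$ would be a $\tau_q$-neighborhood of $q(0)$, a contradiction. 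Note this direct argument also spares you the appeal to the ``closed under final sinks'' characterization.

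One side remark of yours is false, although it is not load-bearing: $\ftb$ \emph{is} isomorphism-closed in $\ctd$. If $\nu(x)$ is $\equiv$-equivalent to a filterbase $\mu(x)$ and $A,B\in\nu(x)$, pick $M_A,M_B\in\mu(x)$ with $M_A\subset A$ and $M_B\subset B$, then $M'\in\mu(x)$ with $M'\subset M_A\cap M_B$, then $N'\in\nu(x)$ with $N'\subset M'\subset A\cap B$; so $\nu(x)$ is a filterbase (and a raster equivalent to a filter is likewise a filter). Your explicit constructions are what make the proof work, but the stated reason for avoiding the closure criterion is incorrect.
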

\begin{proof}
Reflectivity of $\top\inclu\pretop$ is well known (given $(X,\nu)\in\pretop$, the reflection is $\id_X\colon(X,\nu)\to (X,\tau)$ where $\tau$ is a topology on $X$ defined by $\tau\coloneqq\{U\subset X\mid \forall x\in U,\;U\in\nu(x)\}$). For $\pretop\inclu\ftb$ the reflection is given by $\id_X\colon(X,\nu)\to(X,\nu^\up)$ where $\nu^\up(x)\coloneqq(\nu(x))^\up$ (immediate to check) and the coreflection is given by $\id_X\colon(X,\nu^\up)\to(X,\nu)$ (immediate to check). For $\ftb\inclu\ctd$ the coreflection is given by  $\id_X\colon(X,\nu^\cap)\to(X,\nu)$ where $\nu^\cap(x)\coloneqq(\nu(x))^\cap$. To check that this is a coreflection is less immediate, so we give more details: let $(X',\nu')\in\ftb$ and let $f\colon(X',\nu')\to(X,\nu)$ be a morphism in $\ctd$. We show that $f\colon(X',\nu')\to(X,\nu^\cap)$ is a morphism as well. Indeed, take $M\in\nu^\cap(x)$, which can be written as $M=N_1\cap\cdots\cap N_n$ for some $n\in\N$ and some $N_1,\dots,N_n\in\nu(x)$ by the definition of $\nu^\cap$. By assumption, for each $N_i$ there is $N_i'\in\nu'(x)$ such that $f(N_i')\subset N_i$, therefore $f(\cap_i N_i')\subset\cap_i N_i=M$ and since $\nu'(x)$ is a filterbase then there is $N''\in\nu'(x)$ such that $N''\subset \cap_i N_i'$. Thus we have $f(N'')\subset f(\cap_i N_i')\subset M$. For $\pretop\inclu\rst$ the coreflection is given by $\id_X\colon(X,\nu^\cap)\to(X,\nu)$ and it can be checked in the same way as for the coreflectivity of $\ftb\inclu\ctd$ just proved. Finally, for $\rst\inclu\ctd$ the reflection is given by $\id_X\colon(X,\nu)\to(X,\nu^\up)$ and the coreflection is given by $\id_X\colon(X,\nu^\up)\to(X,\nu)$ (immediate to check).
\end{proof}

\begin{prop}
The embeddings $\pretop\inclu\ftb$ and $\rst\inclu\ctd$ are categorical equivalences.
\end{prop}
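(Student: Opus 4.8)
\section*{Proof proposal}

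The plan is to invoke the elementary criterion that a full and faithful functor is a categorical equivalence if and only if it is essentially surjective on objects. Since the two embeddings in question are full embeddings of concrete categories, they are automatically full and faithful, so everything reduces to essential surjectivity: I must show that every centered space is isomorphic \emph{in $\ctd$} to a raster space, and that every filterbase space is isomorphic \emph{in $\ftb$} to a pretopological space. It is worth emphasizing that the labels ``r,c'' attached to these two arrows in the preceding proposition do \emph{not} by themselves yield the claim, since a subcategory may well be both reflective and coreflective without the inclusion being an equivalence. The extra ingredient here is that the reflection and coreflection morphisms coincide, as underlying maps, with a single isomorphism.

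Concretely, fix $(X,\nu)\in\ctd$ and pass to $(X,\nu^\up)$, where $\nu^\up(x)\coloneqq(\nu(x))^\up$. The crucial observation is that $\nu(x)$ and $\nu^\up(x)$ determine the same $\preceq$-class at each point: $\nu(x)\preceq\nu^\up(x)$ because every member of $\nu^\up(x)$ contains some member of $\nu(x)$, and $\nu^\up(x)\preceq\nu(x)$ because $\nu(x)\subset\nu^\up(x)$. Consequently both $\id_X\colon(X,\nu)\to(X,\nu^\up)$ and $\id_X\colon(X,\nu^\up)\to(X,\nu)$ are centered at every point---these are precisely the reflection and coreflection morphisms of the previous proposition---and, being mutually inverse as functions, they constitute an isomorphism $(X,\nu)\cong(X,\nu^\up)$ in $\ctd$. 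Since $(X,\nu^\up)$ is a raster space (its collections are upward closed by construction, and Condition \ref{defn:filter0} is automatic in any centered space because all members of $\nu^\up(x)$ contain $x$), this already proves that $\rst\inclu\ctd$ is essentially surjective, hence an equivalence.

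For $\pretop\inclu\ftb$ the same map does the job, with one additional check: when $\nu(x)$ is a filterbase, $\nu^\up(x)$ is in fact a filter, since Condition \ref{defn:filter2} is inherited (if $A,B\in\nu^\up(x)$ contain $P,Q\in\nu(x)$ respectively, a filterbase element $R\subset P\cap Q$ witnesses $R\subset A\cap B$) and Condition \ref{defn:filter1} holds by construction. Thus $(X,\nu^\up)\in\pretop$, and the isomorphism $\id_X\colon(X,\nu)\cong(X,\nu^\up)$ constructed above lives in $\ftb$ by fullness of the embedding, its endpoints both being filterbase spaces. Every filterbase space is therefore isomorphic in $\ftb$ to a pretopological one, and $\pretop\inclu\ftb$ is an equivalence. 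The only genuinely substantive point in the whole argument is the $\preceq$-indistinguishability of a collection from its upward closure, which is what turns the reflection/coreflection pair into an honest isomorphism; the remaining verifications are the routine axiom checks already implicit in the preceding proposition.
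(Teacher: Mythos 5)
Your proposal is correct and takes essentially the same route as the paper: the heart of both arguments is that $\id_X\colon(X,\nu)\to(X,\nu^\up)$ is an isomorphism in $\ctd$ (resp.\ $\ftb$), with $(X,\nu^\up)$ lying in $\rst$ (resp.\ $\pretop$). You simply make explicit what the paper's one-line proof leaves implicit, namely the full--faithful--essentially-surjective criterion, the $\preceq$-equivalence of $\nu(x)$ and $\nu^\up(x)$, and the check that $\nu^\up(x)$ is a raster (resp.\ a filter).
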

\begin{proof}
It is enough to notice that for each $(X,\nu)\in\ftb$ the function $\id_X\colon(X,\nu)\to(X,\nu^\up)$ is an isomorphism in the category $\ftb$, and for each $(X,\nu)\in\ctd$ the function $\id_X\colon(X,\nu)\to(X,\nu^\up)$ is an isomorphism in the category $\ctd$.
\end{proof}

There is a more significant way to look at the above equivalences\footnote{See \cite{porst1994} for a discussion about the notion of ``concrete equivalence''.}. We recall that in a concrete category $(\cat,|~|)$ the fiber over $X\in\set$ is the collection of $\cat$-objects $C$ such that $|C|=X$. Each fiber comes equipped with a preorder defined as follows: for objects $C_1$, $C_2$ in the fiber over $X$, we write $C_1\leq C_2$ if $\id_X\colon C_1\to C_2$ is a morphism. The concrete category $\cat$ is called amnestic if, for each $C_1,C_2\in\cat$, $C_1\leq C_2$ and $C_2\leq C_1$ imply $C_1=C_2$. In other words, $\cat$ is amnestic if each equivalence class determined by the preorder contains exactly one object or, which is the same, the preorder just defined is actually a (partial) order. Informally, this means that ``each fiber does not have too many objects floating around'' (quotation taken from \cite{acc}). Every concrete category has a so-called amnestic modification, obtained by choosing one member from each equivalence class. Quoting again from \cite{acc}: ``With respect to almost every interesting categorical property, a concrete category is indistinguishable from its amnestic modification''. An amnestic modification is equivalent to the original category and is unique, up to concrete isomorphism.
\begin{prop}\label{prop:amnestic}
$\pretop$ is an amnestic modification of $\ftb$, and $\rst$ is an amnestic modification of $\ctd$.
\end{prop}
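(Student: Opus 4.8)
The plan is to show that the full embeddings $\pretop\inclu\ftb$ and $\rst\inclu\ctd$ exhibit their targets as amnestic modifications of their sources, by verifying directly that each full subcategory selects exactly one object from every equivalence class of every fibre. This is the right thing to check: a full subcategory picking precisely one representative from each fibre-equivalence class is automatically amnestic, since fullness means its induced fibre preorder is the restriction of the ambient one, so any two mutually-$\leq$ objects of the subcategory lie in one ambient class and must coincide; and ``one representative per class'' is exactly the recipe defining an amnestic modification.

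First I would unwind the fibre preorder, working in $\ctd$ (hence in all its subcategories at once). For structures $(X,\nu_1),(X,\nu_2)$ over the same $X$, the relation $(X,\nu_1)\leq(X,\nu_2)$ means $\id_X$ is centered at each point, i.e.\ $\nu_1(x)\preceq\nu_2(x)$ for all $x$; spelled out via the operation $\P\mapsto\P^\up$, this says $\nu_2(x)\subset\nu_1(x)^\up$. Using $\P^{\up\up}=\P^\up$, mutual refinement then collapses to the condition $\nu_1(x)^\up=\nu_2(x)^\up$ for every $x\in X$. Thus the equivalence class of $(X,\nu)$ consists exactly of those structures with the same upward closure at each point.

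Next I would produce the unique representative in each class. Existence is essentially handed to us: the preceding proposition shows $\id_X\colon(X,\nu)\to(X,\nu^\up)$, with $\nu^\up(x)\coloneqq\nu(x)^\up$, is an isomorphism in $\ftb$ (resp.\ $\ctd$), so $(X,\nu^\up)$ shares the equivalence class of $(X,\nu)$; and $\nu^\up(x)$ is a filter (resp.\ a raster), since every set in $\nu^\up(x)$ still contains $x$ so that \ref{defn:filter0} is automatic, \ref{defn:filter1} holds by construction, and \ref{defn:filter2} survives from $\nu(x)$ in the filterbase case (this is the already-noted fact that $\P^\up$ is the smallest raster, and the smallest filter when $\P$ is a filterbase, containing $\P$). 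Hence $(X,\nu^\up)\in\pretop$ (resp.\ $\rst$). For uniqueness, if $(X,\mu_1),(X,\mu_2)$ both lie in $\pretop$ (resp.\ $\rst$) within one class, then $\mu_1(x)^\up=\mu_2(x)^\up$; but filters and rasters satisfy \ref{defn:filter1}, so they equal their own upward closures, giving $\mu_1(x)=\mu_1(x)^\up=\mu_2(x)^\up=\mu_2(x)$ for all $x$, whence $(X,\mu_1)=(X,\mu_2)$.

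Putting these together, every fibre-equivalence class of $\ftb$ (resp.\ $\ctd$) contains exactly one object of $\pretop$ (resp.\ $\rst$), so the latter is an amnestic modification of the former. I expect the delicate points to be bookkeeping rather than deep: the uniqueness-of-representative step is the real content, and it rests entirely on the simple observation that filters and rasters, satisfying \ref{defn:filter1}, are fixed by $(-)^\up$. The one convention that must be kept in force is that each $\nu(x)$ is a nonempty collection --- this is precisely what guarantees $\nu^\up(x)$ is again a genuine (nonempty) raster, so that the selected representative really does lie in $\rst$ and the $\ctd$/$\rst$ case is fully parallel to the $\ftb$/$\pretop$ one.
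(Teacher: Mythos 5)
Your proposal is correct and follows essentially the same route as the paper: the paper's entire proof is your uniqueness step (rasters and filters satisfy \ref{defn:filter1}, hence are fixed by $(-)^\up$, so mutual refinement forces equality), with the existence of a representative in each class left implicit in the preceding proposition's isomorphism $\id_X\colon(X,\nu)\to(X,\nu^\up)$. You simply make explicit what the paper delegates --- including the useful observation that the nonemptiness convention on each $\nu(x)$ is what keeps the $\ctd$/$\rst$ case parallel to the $\ftb$/$\pretop$ one.
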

\begin{proof}
Let $\P_1$ and $\P_2$ be rasters (or filters). Then $\P_1\preceq\P_2$ and $\P_2\preceq\P_1$ imply $\P_1=\P_2$. 
\end{proof}

\section{Conclusions}
We showed that the concepts of filter and filterbase can be seen to emerge from the realm of topological concrete categories once we specify the notion of convergence in the context of centered spaces. Specifically:
\begin{itemize}
\item By Prop.~\ref{prop:filterbase2}, the notion of filterbase selects the subcategory $\ftb$ of $\ctd$ consisting of all centered spaces that admit germs at each point;
\item By Prop.~\ref{prop:amnestic}, the notion of filter arises from an amnestic modification of $\ftb$.
\end{itemize}


\begin{thebibliography}{1}
\bibitem{acc}				Ji{\v r}{\'\i} Ad{\'a}mek, Horst Herrlich, George E.~Strecker, \emph{Abstract and Concrete Categories: The Joy of Cats}, 2004.
\bibitem{cartan1937a}	 	Henri Cartan, \emph{Th\'eorie des filtres}, Comptes rendus hebdomadaires des s\'{e}ances de l'Acad\'{e}mie des Sciences 205:595--598 (1937).
\bibitem{cartan1937b} 		Henri Cartan, \emph{Filtres et ultrafiltres}, Comptes rendus hebdomadaires des s\'{e}ances de l'Acad\'{e}mie des Sciences 205:777--779 (1937).
\bibitem{col-low2001}		Eva Colebunders, Robert Lowen, \emph{Supercategories of Top and the inevitable emergence of topological constructs}, in \emph{Handbook of the History of General Topology}, vol.~3, p.969--1026, Springer (2001).
\bibitem{csaszar2002} 		\'{A}kos Cs\'{a}sz\'{a}r, \emph{Generalized topology, generalized continuity}, Acta Mathematica Hungarica 96(4):351--357 (2002).
\bibitem{kent-min2002}		Darrell C.~Kent, Won Keun Min, \emph{Neighborhood spaces}, International Journal of Mathematics and Mathematical Sciences 32(7):387--399 (2002).
\bibitem{leinster2013}		Tom Leinster, \emph{Codensity and the ultrafilter monad}, Theory and Applications of Categories 28(13):332--370 (2013).
\bibitem{porst1994}			Hans-Eberhard Porst, \emph{What is Concrete Equivalence?}, Applied Categorical Structures 2:57--70 (1994).
\bibitem{richmond-slapal2010} 	Tom Richmond, Josef \v{S}lapal, \emph{Neighborhood spaces and convergence}, Topology Proceedings 35:165--175 (2010).
\bibitem{riesz1908} 			Frigyes Riesz, \emph{Stetigkeitsbegriff und abstrakte Mengenlehre}, Atti del IV Congresso Internazionale dei Matematici 2:18--24 (1908).
\bibitem{stone1938} 			Marshall H.~Stone, \emph{Topological representations of distributive lattices and Brouwerian logics}, \v{C}asopis pro p\v{e}stov\'{a}n\'{i} matematiky a fysiky 67(1):1--25 (1938).
\bibitem{ulam1929} 			Stanis{\l}aw Ulam, \emph{Concerning functions of sets}, Fundamenta Mathematicae 1(14):231--233 (1929).

\end{thebibliography}
\end{document}